\newcommand{\cd}[2][]{\vcenter{\hbox{\xymatrix#1{#2}}}}
\newtheorem{thm}{Theorem}[section]
\newtheorem{lem}[thm]{Lemma}
\theoremstyle{definition}
\newtheorem{ex}[thm]{Example}
\newcommand{\pair}[2]{\langle #1 , #2 \rangle}
\newcommand{\arr}{\rightarrow}
\def\lbr{\mathopen{{[\kern-0.14em[}}}   
\def\rbr{\mathclose{{]\kern-0.14em]}}}  
\newcommand{\strarr}{\mbox{$\circ \kern-0.4em \arr$}}
\newcommand{\eps}{\,\varepsilon\,}
\newcommand{\neps}{\; \slash \kern-0.7em \eps \,}
\newcommand{\E}{{\mathcal{E}}}
\newcommand{\Ab}{{\mathbb{A}}}
\newcommand{\Bb}{{\mathbb{B}}}
\newcommand{\forget}[1]{}
\newcommand{\Sm}{{\mathcal{S}}}
\newcommand{\Eff}{\mathcal{E}\kern-0.14em\mathit{ff}}
\newcommand{\schloop}{{< \kern-0.40em  <}}
\begin{document}

\title{\textsc{An Essential Local Geometric Morphism 
               which is not Locally Connected though its Inverse Image Part 
               defines an Exponential Ideal}}
\author{R.~Garner and T.~Streicher}
\date{May 2021}
\maketitle

In \cite{BP} the authors introduced and studied a property of geometric
morphisms between toposes which they called ``molecular'' and nowadays is
usually referred to as ``locally connected''. One of the various
characterizations of this property is that the inverse image part $F$ of the
geometric morphism $F \dashv U \colon \E\to\Sm$ preserves dependent products,
i.e.\ right adjoints to pullback functors. As described below this requirement
is tantamount to $F$ having a fibered or ``indexed'' left adjoint. In
\emph{loc.~cit.}\ the authors also consider the property that $F$ has an
enriched left adjoint, which amounts to $F$ preserving (ordinary) exponentials
and having a left adjoint.

In this note, we exhibit a geometric morphism
$F \dashv U \colon \E\to\Sm$ which satisfies the weaker of these two
sets of conditions, but not the stronger: thus, it is not locally
connected although it is essential (i.e. $F$ has a left adjoint) and
the inverse image $F$ preserves exponentials. In fact, $F$ will be
full and faithful and $U$ will have a right adjoint, i.e.~our
geometric morphism is \emph{local}; moreover, the left adjoint of $F$
will preserve finite products, so that $F$ even exhibits $\Sm$ as an
\emph{exponential ideal} in $\E$.

\section{Preliminaries}

A geometric morphism $F \dashv U \colon \E\to\Sm$ is called \emph{locally
  connected} just when $F$ has a left adjoint $L$ which is fibered or indexed
over $\Sm$. A succinct way of expressing this is as the requirement that
\begin{equation}\label{eq:1}
  \cd{
    {B} \ar[r]^-{f} \ar[d]_{b} &
    {A} \ar[d]^{a} \\
    {FJ} \ar[r]_-{Fu} &
    {FI}
  } \ \ \text{ a pullback} \quad \text{implies} \quad
  \cd{
    {LB} \ar[r]^-{Lf} \ar[d]_{\widehat b} &
    {LA} \ar[d]^{\widehat a} \\
    {J} \ar[r]_-{u} &
    {I}
  } \ \  \text{ a pullback}
\end{equation}
as discussed, for example, in \cite{BP,Ele,Str}. 

When $I$ (and thus also $FI$) is a terminal object, this condition boils down to
the requirement that $L \dashv F$ validates \emph{Frobenius reciprocity}, i.e.\
that for all $I \in \Sm$ and $A \in \E$ the canonical map
$\pair{\widehat{\pi_1}}{L\pi_2} \colon L(FI \times A) \to I \times LA$ is an
isomorphism. One easily checks that local connectedness is equivalent to
Frobenius reciprocity holding not only for the adjunction $L \dashv F$ itself,
but also for each adjunction on slices
$L_I = \Sigma_{\varepsilon_I} \circ L_{/FI} \dashv F_{/I} = F_I$ (where 
$\varepsilon_I$ is the counit of $L \dashv F$ at $I$). Furthermore, by
\cite[Lemma~A.1.5.8]{Ele} an adjunction between cartesian closed categories
validates Frobenius reciprocity if and only if its right adjoint preserves
exponentials.

In fact, for a geometric morphism $F \dashv U \colon \E\to\Sm$ the
following conditions are equivalent:
\begin{enumerate}[(i),itemsep=0em]
\item $F \dashv U$ is locally connected;
\item $F$ preserves dependent products (i.e.\ right adjoints to pullback
  functors);
\item $F_{/I}$ preserves exponentials for all $I\in\Sm$,
\end{enumerate}
as formulated in \cite[Proposition~C3.3.1]{Ele}.
\forget{\footnote{Note that this result
is stronger than what would follow from the discussion above, since
for the implications (ii) $\Rightarrow$ (i) and (iii) $\Rightarrow$
(i) it is not necessary to assume the existence of the left adjoint
$L$; rather, this can be deduced from the fibred adjoint functor
theorem.}}

Some of the notions and remarks above extend to finitely complete
categories. We call an adjunction $L \dashv F \colon \Ab\to\Bb$
between finitely complete categories \emph{stably Frobenius} when it
validates the condition in diagram (\ref{eq:1}). Stably Frobenius
adjunctions with full and faithful right adjoint $F$ are sometimes
called \emph{semi-left-exact reflections}~\cite{CHK}; they are
reflections whose left adjoint preserves pullbacks along morphisms in
the image of $F$.

Further, if $\Ab$ and $\Bb$ are locally cartesian closed then by 
\cite[Lemma~A1.5.8]{Ele} an adjunction $L \dashv F \colon \Ab\to\Bb$ is 
stably Frobenius if and only if condition (iii) above holds, i.e.\ all slices 
$F_{/I}$ preserve exponentials. In fact, in this case we can say slightly more; 
$F$ not only preserves exponentials but \emph{creates} them:
\begin{lem}
  \label{lem:1}
  If $L \dashv F \colon \Ab \rightarrow \Bb$ is a semi-left-exact reflection
  between finitely complete categories, and $\Bb$ is locally cartesian closed, 
  then so too is $\Ab$.
\end{lem}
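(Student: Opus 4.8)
The plan is to verify local cartesian closure of $\Ab$ directly, by showing that every pullback functor in $\Ab$ has a right adjoint. Since the full and faithful right adjoint $F$ exhibits $\Ab$ as a reflective subcategory of the finitely complete category $\Bb$, the subcategory $\Ab$ is closed under limits in $\Bb$ and hence already finitely complete, and $F$ preserves all limits. I shall freely identify $\Ab$ with its essential image under $F$, so that its objects are precisely the \emph{local} objects of $\Bb$, i.e.\ those $B$ for which the reflection unit $\eta_B \colon B \to FLB$ is invertible, equivalently those orthogonal to every unit $\eta_Z$.

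Fix a morphism $a \colon A \to I$ of $\Ab$; the goal is a right adjoint $\Pi_a$ to $a^\ast \colon \Ab/I \to \Ab/A$. The idea is to borrow the dependent product $\Pi^{\Bb}_{Fa} \colon \Bb/FA \to \Bb/FI$ supplied by local cartesian closure of $\Bb$: given $g \colon G \to A$ in $\Ab$, I would form $\Pi^{\Bb}_{Fa}(Fg) = (\pi \colon P \to FI)$ and then argue that $P$ is again local, so that $\pi = Fd$ for a unique $d \colon D \to I$ in $\Ab$, which I take to be $\Pi_a(g)$. Granting that $P$ is local, the adjunction is a short calculation: for $d' \colon D' \to I$ in $\Ab$,
\[
  \Ab/I(d', \Pi_a g) \cong \Bb/FI(Fd', P) \cong \Bb/FA((Fa)^\ast Fd', Fg) \cong \Ab/A(a^\ast d', g),
\]
using fullness and faithfulness of $F$ (and of the induced functors on slices) at the outer steps, the defining adjunction of $\Pi^{\Bb}_{Fa}$ in the middle, and the fact that $F$ preserves the pullback $(Fa)^\ast Fd' = F(a^\ast d')$. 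As this holds for every $a$ in $\Ab$, it exhibits $\Ab$ as locally cartesian closed, with dependent products created from those of $\Bb$.

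The crux --- and the only place the stably Frobenius hypothesis is used --- is the claim that $P$ is a local object, which I would prove by checking orthogonality of $P$ against an arbitrary unit $\eta_Z \colon Z \to \bar Z$, writing $\bar Z := FLZ$. Unwinding the universal property of $\Pi^{\Bb}_{Fa}$, a map $Z \to P$ amounts to a map $\zeta \colon Z \to FI$ together with a map $Z \times_{FI} FA \to FG$ over $FA$; since $FI$ is local the components $\zeta$ already match up along $\eta_Z$, so it remains to match the second components. Here is where stably Frobenius enters: the square expressing $Z \times_{FI} FA$ as the pullback of $\zeta$ along $Fa$ has its lower edge $Fa$ in the image of $F$, so by diagram~(\ref{eq:1}) the functor $L$ carries it to a pullback, giving $L(Z \times_{FI} FA) \cong A \times_I LZ$. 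Because $F$ preserves pullbacks, $F(A \times_I LZ) \cong \bar Z \times_{FI} FA$, which lies in $\Ab$; thus the map $Z \times_{FI} FA \to \bar Z \times_{FI} FA$ induced by $\eta_Z$ over $FA$ is a reflection, and orthogonality of the local object $FG$ yields the required unique extension. I expect the main subtlety to be the bookkeeping of the ``over $FA$'' conditions, so that the morphism produced by orthogonality is genuinely a slice map; this follows because $FA$ is itself local, so the two candidate structure maps to $FA$ agree after precomposition with the reflection and hence agree outright.
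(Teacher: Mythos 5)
Your argument is correct, but note that the paper does not actually prove this lemma: it simply cites \cite[Lemma~4.3]{GL}, so what you have written is a self-contained substitute for that citation rather than a parallel to an in-text argument. Your route is the natural one and all the essential points are in place: you identify $\Ab$ with the local objects of $\Bb$, transport the dependent product $\Pi^{\Bb}_{Fa}(Fg)$ back along the fully faithful $F$, and reduce everything to showing that $P$ is orthogonal to every unit $\eta_Z$. The crux is handled correctly: the square exhibiting $Z \times_{FI} FA$ as a pullback along $Fa$ is precisely of the shape in diagram~(\ref{eq:1}), so the stably Frobenius condition gives $L(Z\times_{FI}FA)\cong A\times_I LZ$, and a short naturality chase (which you implicitly invoke) identifies the comparison map $Z\times_{FI}FA\to \bar Z\times_{FI}FA$ with the unit $\eta_{Z\times_{FI}FA}$ up to the isomorphism $FL(Z\times_{FI}FA)\cong \bar Z\times_{FI}FA$; orthogonality of $FG$ then supplies the unique extension of the second component, and your closing remark correctly disposes of the ``over $FA$'' condition using locality of $FA$. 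Two small points you leave tacit but which go through routinely: the uniqueness half of orthogonality of $P$ needs the naturality of the adjunction $(Fa)^\ast\dashv\Pi^{\Bb}_{Fa}$ in the $\Bb/FI$-variable to see that any extension of $h$ must have second component extending $k$; and one should observe that $a^\ast$ on $\Ab$-slices is computed as the restriction of $(Fa)^\ast$ because $F$, being a right adjoint, preserves pullbacks. Neither is a gap.
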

\begin{proof}
  This is~\cite[Lemma~4.3]{GL}.
\end{proof}

For a reflection $L \dashv F \colon \Ab \rightarrow \Bb$, a stronger condition 
than Frobenius reciprocity is the requirement that $L$ preserve all finite 
products. If $\Bb$ is cartesian closed then, by \cite[Proposition~A4.3.1]{Ele}, 
$L$ preserve all finite products just when the subcategory determined by $F$ is (not 
only exponential-closed but) an exponential \emph{ideal}. The ``stable'' 
version of this condition is that $L$ preserves all pullbacks over objects in 
the image of $F$; in~\cite{CHK} this condition was called 
\emph{having stable units}.

It has been shown in Prop.~10.3 of \cite{LM15} that for essential connected geometric 
morphisms $F \dashv U \colon \E\to\Sm$ the left adjoint $L$ of $F$ has stable units 
if and only if $F \dashv U$ is locally connected and $L$ preserves binary products.

As shown 
in \cite[Proposition~2.7]{JohPLC} if $L \dashv F \dashv U \dashv R \colon \Sm\to\E$ 
with $F$ (and thus also $R$) full and faithful then $\Sm$ is an exponential 
ideal in $\E$ (via $F$) whenever $F$ preserves exponentials and all components 
of the canonical transformation $\theta \colon U \to L$ are epimorphic.
Here, as in \cite{JohPLC,LM15} $\theta_A \colon UA \to LA$ is the unique map 
whose image under $F$ is 
$\eta_A \circ \varepsilon_A \colon FUA \rightarrow A \rightarrow FLA$.
As also shown in \cite{JohPLC}, for a locally connected, hyperconnected and local
geometric morphism $F \dashv U \colon \E\to\Sm$, the left adjoint $L$ of $F$ 
necessarily preserves finite products.

\section{The counterexample}

Any functor $L \colon \Bb \rightarrow \Ab$ between small categories
induces a geometric morphism
$L^\ast \dashv L_\ast \colon \widehat{\Bb} \to \widehat{\Ab}$ between
presheaf categories, where $L^\ast$ and $L_\ast$ are restriction and
right Kan extension along $L$. This geometric morphism is always
essential, since we have $L_! \dashv L^\ast$, where $L_!$ is left Kan
extension along $L$. If $L$ has a fully faithful right adjoint
$L \dashv F \colon \Ab \rightarrow \Bb$, then $L_\ast$ has the fully
faithful right adjoint $F_\ast$, so our geometric morphism is in fact
\emph{local}.

\begin{lem}\label{lem:2}
For reflections $L \dashv F \colon \Ab \rightarrow \Bb$ 
between small finitely complete categories the following assertions hold.
  \begin{enumerate}[(i),itemsep=0em]
  \item[\emph{(i)}] $L^\ast$ is fully faithful;
  \item[\emph{(ii)}] If $L$ preserves finite products, then so does $L_!$, so that $L^\ast$
    exhibits $\widehat{\Ab}$ as an exponential ideal in $\widehat{\Bb}$;
  \item[\emph{(iii)}] If $L^\ast \dashv L_\ast$ is locally connected then $L \dashv F$
    is semi-left-exact.
  \end{enumerate}
\end{lem}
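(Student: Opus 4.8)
The plan is to reduce everything to the behaviour of the induced functors on representables, where the base adjunction $L \dashv F$ can be brought to bear directly. The two identities I would establish first are $L^* \cong F_!$ and $L_! \circ \Yon_{\Bb} \cong \Yon_{\Ab} \circ L$, where $\Yon$ denotes the respective Yoneda embeddings. For the first, $L^*$ is a left adjoint (it has the further right adjoint $L_*$), hence cocontinuous, as is $F_!$; and on a representable one computes $(L^*\Yon_{\Ab} a)(b) = \Ab(Lb,a) \cong \Bb(b,Fa) = (\Yon_{\Bb} Fa)(b)$ using $L \dashv F$, so that $L^* \Yon_{\Ab} a \cong \Yon_{\Bb} Fa \cong F_! \Yon_{\Ab} a$; since two cocontinuous functors on a presheaf category agree as soon as they agree on representables, $L^* \cong F_!$. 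Because $F$ is fully faithful its left Kan extension $F_!$ is fully faithful, which gives part (i). The second identity is the defining value of $L_!$ on representables, and I shall also use the counit $\varepsilon \colon LF \Rightarrow \id_{\Ab}$, which is an isomorphism since $F$ is fully faithful.

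For (ii), as $L_!$ is cocontinuous it suffices to check that it preserves finite products, and I would do this by reducing to representables. The terminal presheaf on $\Bb$ is $\Yon_{\Bb} 1$, and $L_!\Yon_{\Bb} 1 = \Yon_{\Ab} L1 \cong \Yon_{\Ab} 1$ since $L$ preserves the terminal object; and $L_!(\Yon_{\Bb} b \times \Yon_{\Bb} b') = L_!\Yon_{\Bb}(b \times b') = \Yon_{\Ab} L(b\times b') \cong \Yon_{\Ab}(Lb \times Lb') \cong L_!\Yon_{\Bb} b \times L_!\Yon_{\Bb} b'$, using that $\Yon$ preserves products and $L$ preserves binary products. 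To pass from representables to arbitrary presheaves I would write $P$ and $Q$ as colimits of representables and invoke the fact that in the cartesian closed categories $\widehat{\Bb}$ and $\widehat{\Ab}$ the functor $-\times-$ preserves colimits in each variable, so the product--colimit interchange lets $L_!$ commute with binary products. Thus $L_!$ preserves finite products; since $L_!$ is left adjoint to the fully faithful $L^*$ (part (i)) and $\widehat{\Bb}$ is cartesian closed, \cite[Proposition~A4.3.1]{Ele} applied to the reflection $L_! \dashv L^*$ yields that $L^*$ exhibits $\widehat{\Ab}$ as an exponential ideal.

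For (iii), I would read local connectedness of $L^* \dashv L_*$ as the instance of diagram~(\ref{eq:1}) for the adjunction $L_! \dashv L^*$ (so $L^*$ plays the role of the inverse image and $L_!$ that of its indexed left adjoint), and then restrict it to representable test data. Given $u_0 \colon J_0 \to I_0$ in $\Ab$, a map $a_0 \colon A_0 \to FI_0$ in $\Bb$, and the pullback $P_0 = A_0 \times_{FI_0} FJ_0$ in $\Bb$, applying $\Yon_{\Bb}$ (which preserves pullbacks) yields a pullback square in $\widehat{\Bb}$ sitting over $L^*(\Yon_{\Ab} u_0)$, by the identity $L^*\Yon_{\Ab} \cong \Yon_{\Bb} F$. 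Local connectedness makes the image of this square under $L_!$ a pullback in $\widehat{\Ab}$; computing that image via $L_!\Yon_{\Bb} \cong \Yon_{\Ab} L$ together with $\varepsilon \colon LF \cong \id_{\Ab}$, it is precisely $\Yon_{\Ab}$ applied to the square with vertices $L(P_0), LA_0, J_0, I_0$ obtained from $L$ of the original square post-composed with the counit. Since $\Yon_{\Ab}$ is fully faithful it reflects limits, so this latter square is a pullback in $\Ab$; as $A_0, a_0, u_0$ were arbitrary, this says exactly that $L$ preserves pullbacks along morphisms $Fu_0$ in the image of $F$, i.e.\ $L \dashv F$ is semi-left-exact.

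The representable computations in (i) and (ii) are routine; the one step in (ii) needing care is the product--colimit interchange, where one must genuinely use cartesian closedness of both presheaf toposes rather than expect products to commute with colimits. The conceptual heart, and the main obstacle, is (iii): one has to match the bookkeeping of diagram~(\ref{eq:1}) against the base data, and then observe that testing on representables loses no information, because $\Yon$ preserves pullbacks on the $\Bb$-side and reflects them on the $\Ab$-side, with the counit $LF \cong \id_{\Ab}$ identifying the transported cospan with $u_0 \colon J_0 \to I_0$.
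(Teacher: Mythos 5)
Your proposal is correct and follows essentially the same route as the paper's proof: identifying $L^*$ with $F_!$ for (i), checking product preservation on representables and extending by cocontinuity for (ii), and for (iii) translating local connectedness into stable Frobenius for $L_! \dashv L^*$ and restricting to representables, where full faithfulness of $F$ upgrades stably Frobenius to semi-left-exact. The only difference is that you spell out the representable bookkeeping in (iii) (Yoneda preserving and reflecting pullbacks, the counit $LF \cong \id$) which the paper leaves implicit.
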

\begin{proof}
  For (i) note that $L^*$ is naturally
  isomorphic to $F_! \colon \widehat{\Ab} \rightarrow \widehat{\Bb}$, and left
  Kan extension along a fully faithful functor is always fully faithful.

  For (ii), since $L$ preserves finite products, $L_!$ preserves finite products
  of representables. Since every presheaf is a colimit of representables, and
  $\times$ preserves colimits in each variable, it follows that $L_!$ preserves
  finite products.

  Finally, for (iii), to say that $L^* \dashv L_*$ is locally connected is to say
  that $L_! \dashv L^*$ is stably Frobenius. Since $L_!$ and $L^* \cong F_!$
  preserve representable objects, the adjunction $L \dashv F$ is also stably
  Frobenius; and since $F$ by assumption is full and faithful, the adjunction
  $L \dashv F$ is in fact semi-left-exact.
\end{proof}

From Lemmas~\ref{lem:1} and~\ref{lem:2} we immediately obtain:
\begin{thm}
  Consider a reflection between small finitely complete categories
  $L \dashv F \colon \Ab \rightarrow \Bb$ for which:
  \begin{itemize}[itemsep=0em]
  \item  $L$ preserves finite products; 
  \item $\Bb$ is locally cartesian
    closed;
  \item  $\Ab$ is \emph{not} locally
    cartesian closed.
  \end{itemize}
  The geometric morphism
  $L^\ast \dashv L_\ast \colon \widehat \Bb \rightarrow \widehat \Ab$ is
  essential and local but not locally connected, i.e.\ $L^*$ does not preserve
  dependent products, although the left adjoint $L_!$ of $L^*$ preserves finite
  products, i.e.\ the full subcategory of $\widehat{\Bb}$ given by the
  image of $\widehat{\Ab}$ under $L^*$ is
  an exponential ideal.
\end{thm}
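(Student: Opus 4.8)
The plan is to verify the four asserted properties of the geometric morphism $L^\ast \dashv L_\ast$ one at a time, drawing almost entirely on the structural facts recorded at the start of this section together with Lemmas~\ref{lem:1} and~\ref{lem:2}. First I would dispatch the two ``positive'' structural claims. That the morphism is essential is immediate: any functor $L$ between small categories yields the adjoint string $L_! \dashv L^\ast \dashv L_\ast$, so $L^\ast$ always carries the left adjoint $L_!$. That it is local follows because $L$, being the reflector, has the full and faithful right adjoint $F$; hence $L_\ast$ acquires the full and faithful right adjoint $F_\ast$, exactly as noted above. The exponential-ideal claim is then handled directly by Lemma~\ref{lem:2}(ii): the hypothesis that $L$ preserves finite products gives that $L_!$ preserves finite products, which is precisely the assertion that $L^\ast$ exhibits $\widehat{\Ab}$ as an exponential ideal in $\widehat{\Bb}$.

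The substance of the theorem is the failure of local connectedness, and here I would argue by contradiction. Suppose $L^\ast \dashv L_\ast$ were locally connected. By Lemma~\ref{lem:2}(iii) this forces the reflection $L \dashv F$ to be semi-left-exact. Now invoke the hypothesis that $\Bb$ is locally cartesian closed: Lemma~\ref{lem:1} applies to the semi-left-exact reflection $L \dashv F$ and concludes that $\Ab$ is locally cartesian closed as well. This contradicts the standing assumption that $\Ab$ is \emph{not} locally cartesian closed. Hence $L^\ast \dashv L_\ast$ is not locally connected; equivalently, by the equivalence of conditions (i)--(iii) in the Preliminaries, $L^\ast$ does not preserve dependent products.

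I expect no genuine computational obstacle, since every quantitative step has already been isolated into the two lemmas; the care required is rather bookkeeping. The one point to state precisely is the matching of vocabulary across the two levels: local connectedness of the presheaf-level morphism $L^\ast \dashv L_\ast$ must be translated, through Lemma~\ref{lem:2}(iii), into semi-left-exactness of the base-level reflection $L \dashv F$, to which Lemma~\ref{lem:1} can then be applied. Provided one keeps straight that $L^\ast \cong F_!$ is the inverse image, that $L_!$ is its left adjoint, and that $F_\ast$ is the right adjoint of $L_\ast$, the argument assembles immediately from Lemmas~\ref{lem:1} and~\ref{lem:2}.
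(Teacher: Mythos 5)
Your proof is correct and follows exactly the route the paper intends: the paper simply states that the theorem follows immediately from Lemmas~\ref{lem:1} and~\ref{lem:2} together with the observations preceding Lemma~\ref{lem:2}, and your write-up is precisely the expansion of that one-line derivation, including the contrapositive use of Lemma~\ref{lem:2}(iii) and Lemma~\ref{lem:1} to rule out local connectedness.
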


To complete the construction of our counterexample, it thus suffices to find a
reflection $L \dashv F \colon \Ab \rightarrow \Bb$ with the properties listed
above. In fact, we give two examples of quite different flavour.

\begin{ex}
   In \cite{GL} the authors consider the following situation. Let $\Bb$ be a
  small category equivalent to the category of finite reflexive graphs and
  morphisms between them, and let $F \colon \Ab \to \Bb$ be the inclusion of the
  full subcategory of finite preorders. Example~3.9 of \emph{loc.~cit.}~shows
  that $\Ab$ is an exponential ideal in $\Bb$, so that $L$ preserves finite
  products. Of course, as a category of finite presheaves, the category $\Bb$ is
  locally cartesian closed; however, as also noted in \emph{loc.~cit.}, the
  category $\Ab$ is not so. So by the corollary above, the geometric morphism
  $L^* \dashv L_*$ is essential and local, with inverse image giving rise to
  an exponential ideal, but is not locally connected. 
\end{ex}

\begin{ex}
  \label{ex:2}
   A further counterexample arises from realizability models of dependent
  type theories as studied in \cite{SemTT}. For our purposes we may
  restrict to the case where the underlying partial combinatory algebra
  is the \emph{First Kleene Algebra} corresponding to the standard notion
  of computation on natural numbers. 
 
  For $\Bb$ one takes the category of modest sets and for $\Ab$ the
  full reflective subcategory on $\neg\neg$-closed subobjects of
  powers of $N$, the natural numbers object of $\Bb$. The category $\Bb$ 
  is locally cartesian closed and $\Ab$ forms a full reflective subcategory 
  of $\Bb$ which is an exponential ideal since it is a model of the Calculus 
  of Constructions as verified in Chapter 2 of \cite{SemTT}. Moreover, as 
  follows from (the proof of) Theorem 2 in the Appendix of \cite{SemTT}, the 
  category $\Ab$ is not a sub-locally-cartesian-closed-category of $\Bb$.

  For sake of completeness we describe the counterexample explicitly
  but refer the reader for its verification to \emph{loc.cit}. Let
  $I = N^N$ and $A$ be the $\neg\neg$-closed subobject of $N^N$ consisting 
  of those $f$ with $f(0) \leq 1$ and $f(0)=0$ if and only if $f(n+1) = 0$ 
  for all $n$. Let $g \colon A \to I$ send $f$ to $\lambda n. f(n+1)$ and 
  $h : B \to A$ be the projection $\pi \colon A \times N \to A$. Obviously, 
  both $g$ and $h$ are maps in $\Ab$ but the domain of the dependent
  products $\Pi_g h$ taken in $\Bb$ is not isomorphic to an object in $\Ab$ 
  as can be shown using the Kreisel-Lacombe-Shoenfield Theorem well known 
  from computability theory.
\end{ex}

Note that in both examples the geometric morphism $L^* \dashv L_*$ is not hyperconnected
since $L^*$ is not full on subobjects. The reason is that there are sieves $S$ in $\Bb$ on
some $FA$ such that some $u : B \to FA$ is in $S$ but its reflection $FLu$ is not in $S$.

In \cite{HR20} one finds an example of an essential hyperconnected and local geometric 
morphism which, however, fails to be locally connected. Though their counterexample and 
ours are quite different in nature, together they seem to suggest that an essential 
hyperconnected local geometric morphism need not be locally connected even if its inverse 
image part is an exponential ideal. This would provide a (negative) answer to the question
raised immediately after \cite[Corollary~10.4]{LM15}. 

A positive answer to this question would mean that the inverse image part of a 
hyperconnected and local geometric morphism necessarily preserves dependent products
whenever it preserves exponentials. This, however, is most unlikely since it would mean
that dependent functions spaces are definable in first order intuitionistic logic 
from ordinary function spaces.

\end{document}